\begin{document}
\author{Mohabat Tarkeshian}

\title{On the geometry of exponential random graphs and applications}

\date{\today}


\address{Mathematics and Statistics Department\\St. Francis Xavier University \\ 2323 Notre Dame Avenue \\ Antigonish, NS B2G 2W5}
\email{mtarkesh@stfx.ca}

\begin{abstract}  
In a seminal paper in 2009, Borcea, Br{\"a}nd{\'e}n, and Liggett described the connection between probability distributions and the geometry of their generating polynomials. 
Namely, they characterized that stable generating polynomials correspond to distributions with the strongest form of negative dependence. This motivates us to investigate other distributions that can have this property, and our focus is on random graph models. 
In this article, we will lay the groundwork to investigate Markov random graphs, and more generally exponential random graph models (ERGMs), from this geometric perspective. 
In particular, by determining when their corresponding generating polynomials are either stable and/or Lorentzian. 
The \textit{Lorentzian} property was first described in 2020 by Br{\"a}nd{\'e}n and Huh and independently by Anari, Oveis-Gharan, and Vinzant where the latter group called it the \textit{completely log-concave} property. 
The theory of \textit{stable} polynomials predates this, and is commonly thought of as the multivariate notion of real-rootedness. 
Br{\"a}nd{\'e}n and Huh proved that stable polynomials are always Lorentzian. 
Although it is a strong condition, verifying stability is not always feasible. We will characterize when certain classes of Markov random graphs are stable and when they are only Lorentzian.
We then shift our attention to applications of these properties to real-world networks.
\end{abstract}

\maketitle 
\section{Introduction: random graphs and geometry} 
\label{sec:introduction_random_graphs_and_geometry}

The story of Markov and exponential random graphs relies upon using local behaviours to predict global structure. 
They are described extensively in relation to social network analysis (\cite{pattisonrobins,snijdal,frierand}). 
The key idea is that local interactions (via a \textit{Markov property}) completely determine and are determined by the global behaviour of the probability distribution (a \textit{Gibbs distribution}). We focus on algebraic properties of these models by using a well-developed dictionary between probability distributions and multiaffine polynomials (\cite{bbl,algv1,bh}). Using this dictionary, we focus on two major classes of polynomials: stable polynomials and Lorentzian polynomials.
Both properties imply a form of \textit{negative dependence} of the probability distribution.

 This article focuses on geometric properties of random graph models.
Sections \ref{sec:the_setup_probability_theory_random_fields_and_random_graphs} through \ref{sec:lorentzian_markov_random_graphs} focus on summarizing the major results from \cite{mthesis} and their implications.
Section \ref{sub:algorithms} shines light on applying these results to real-world social networks.


\section{The setup: Probability theory, random fields, and random graphs} 
\label{sec:the_setup_probability_theory_random_fields_and_random_graphs}

In this section, we lay the groundwork for understanding exponential random graphs. 
For a more thorough introduction, see \cite{mthesis}.
In order to define random graph models, we begin with defining a probability distribution on a finite set.
Let \(\Omega\) be a finite set. 
We define a \textbf{probability distribution} on \(\Omega\) as a function \(P: \Omega \to [0,1]\) such that \(\sum_{\omega \in \Omega} P(\omega) = 1\).
If \(P : \Omega \to (0,1)\), then \(P\) is said to be \textbf{positive}.
As usual, if \(A\seq \Omega\), the \textbf{probability of event \(A\)}
 occurring is \(P(A) \colonequals \sum_{a\in A} P(a)\).
Since we are interested in finite graphs, we define a \textbf{discrete random variable} \(X : \Omega \to \R\)  as a function with finite support.
The probability mass function of a discrete random variable \(X\) is given by \(p(x) = P(X=x)\) for \(x\in \Omega\).

A particularly nice discrete random variable is the \textit{Bernoulli} random variable defined by \(X: \{0,1\} \to \R\) where \(X(i) = i\) and 
\begin{align*}
P(X=x) & = \begin{cases}
 p & x =1\\
 1-p & x = 0	
 \end{cases}
\end{align*}
for some \(p\in [0,1]\).

\subsection{Markov random fields} 
\label{sub:markov_random_fields}
Let \(\Omega\) be a finite set. 
Throughout, we will keep an eye towards an underlying finite graph \(G=(V,E)\). In this context, \(\Omega = E\) will be the set of edges of \(G\).
\begin{defn}\label{def:randomfield}
Let \(\omega \in \Omega\). Define \(X_\omega\) as the \textit{Bernoulli random variable} with values in \(\{0,1\}\).
The collection \(X\coloneqq \{X_\omega\}_{\omega \in \Omega}\) is a \textbf{random field} on \(\Omega\) with \textbf{phases} in \(\{0,1\}\). 
	
\end{defn}
We are interested in \textit{Markov} random fields, which encode a \textit{memoryless} dependence through local interactions. 
That is, it involves encoding dependencies between random variables in a random field \(X\). 
Local dependence is made precise via a neighbourhood system.
\begin{defn}\label{def:nbhdsystem}
	A \textbf{neighbourhood system} on \(\Om\), \(\mathcal N \coloneqq \{N_\omega \seq \Om\}_{\omega \in \Om}\), is such that:
	\begin{enumerate}
		\item \textit{Punctured neighbourhoods}: \(\omega \notin N_{\omega}\) for all \(\omega \in \Om\), and
		\item \textit{Symmetric neighbourhoods}: \(\omega_1 \in N_{\omega_2}\) if and only if \(\omega_2 \in N_{\omega_1}\) for all \(\omega_1, \omega_2 \in \Om\).
	\end{enumerate}
\end{defn}
The memoryless local property of Markov random fields can now be made precise.
\begin{defn}\label{def:mrf}
	A \textbf{Markov random field (MRF)} (with respect to \(\mathcal N\)) is a random field \(X\) such that for all \(\omega \in \Om\) and \(x\in \{0,1\}^\Om\)\footnote{The powerset \(\{0,1\}^\Om\) is sometimes called the \textit{configuration space} of \(\Om\) and elements are referred to as \textit{configurations}.},
	\[
		P(X_\omega = x_{\omega} \mid X_{\overline \omega} = x_{\overline \omega}) = P(X_\omega = x_\omega \mid X_{N_{\omega}}= x_{N_\omega})
\]
where \(\overline \omega \coloneqq \Omega \setminus \{\omega\}\) and for \(A\seq \Om\), \(x\in \{0,1\}^\Om\), \(X_A \coloneqq \{X_\omega : \omega \in A\}\).
Further, if \(P(X=x)>0\) for all \(x\in \{0,1\}^\Om\), then \(X\) is a \textbf{positive Markov random field}.
\end{defn}
That is, the conditional probability of the event \(\{X_\omega = x_{\omega}\}\) given all of the information outside of \(\omega\) is the \textit{same as} the conditional probability given only what is in the neighbourhood of \(\omega\).
In other words, it means that the behaviour of \(X_\omega\) can be thought of as being dependent on the behaviour of \(X_{\omega'}\) for \(\omega'\) in the neighbourhood of \(\omega\), and as not dependent on any such element sufficiently far away (i.e., outside of the neighbourhood).

Positive Markov random fields have more dependence woven into their structure, as they are completely determined by their \textit{local specification} map (see \cite{brem}, Theorem 10.1.5).
More remarkably, positive Markov random fields are in a one-to-one correspondence with \textit{finite Gibbs fields} via the \textit{Hammersley-Clifford Theorem}.
We will not delve into the intricacies of Gibbs fields here (see \cite{mthesis} for more detail). 
Instead, we will note that a finite Gibbs field gives an explicit formula for the probability of \(x\in \{0,1\}^\Om\) in the following form\footnote{As standard notation, we use \(\propto\) to denote that the probability is equal to the quantity up to renormalization. In general, we do not consider the normalizing constant in our probability computations because the geometric properties of interest (i.e., stable and Lorentzian) are not influenced by positive scaling.}:
\begin{align*}
	P(X=x) & \propto \exp\left(-\frac 1 T \mathcal E_{\mathcal P}(x)\right)
\end{align*}
where \(T>0\) is a parameter, and \(\mathcal E_{\mathcal P} : x \to \sum_{C\in \mathcal C} \mathcal P_{C} (x_C)\) is the associated \textit{energy function} on the \(\mathcal N\)-cliques \(C\) of \(\{0,1\}^\Om\).
For our purposes, what is important is that this is an \textit{exponential distribution} that relies only on functions defined on some subsets that are determined by a neighbourhood system \(\mathcal N\). 
The general class of random graphs that have this distribution is called an \textbf{exponential random graph}.

We now shift our attention to the case where \(\Omega = E\) is the edge set of a graph \(G=(V,E)\). 
The neighbourhood system used to define a Markov exponential random graph is defined as follows.
\begin{defn}\label{def:markovrandomgraph}
	The \textbf{Markov neighbourhood system} \(\mathcal N\) on \(E\) is defined by \textit{adjacency}. 
	That is,
	\[ N_e \coloneqq \{e' \in E\setminus \{e\} : e,e' \text{ share a vertex}\}.\]
	A Markov \(\mathcal N\)-clique then takes one of two forms: a \(3\)-cycle (i.e., triangle \(\triangle\)) or a \(k\)-star \(S_k\) in \(G\). 
	The corresponding finite Gibbs field is called a \textbf{Markov random graph on \(G\)} with probabilities 
	\[ P(S) \propto  \exp\left(\frac 1 T\left(\beta t(\triangle, G_S) + \sum_{k=1}^{\max \deg(G_S)} \beta_k t(S_k, G_S)\right)\right) \]
	where \(t(H, G) = \frac{\vert \Hom(H,G)\vert}{\vert V(G) \vert^{\vert V(H)\vert}}\) is the \textbf{homomorphism density} of \(H\) in \(G\). 
	We note that \(\vert \Hom(S_k, G)\vert = \sum_{v\in V(G)} \deg(v)^k\) and \(\vert \Hom(\triangle, G)\vert  = 6(\# \text{ triangles in } G)\).
	The graph \(G_S = (V, S)\) is the spanning subgraph of \(G\) defined by the edges in \(S\). 
	This model has parameters \(T>0\) and \(\beta, \beta_k \in \mathbb R\).
\end{defn}
Much of the results and work that follow will be to pinpoint subsets of the parameter space \((T, \beta, \beta_1, \dots, \beta_k)\in \R_{>0} \times \R^{m}\) such that exponential random graphs exhibit some nice properties. 

\begin{ex}\label{ex:cubicdef}
We will now compute the probabilities associated to the complete graph on 3 vertices (the triangle \(K_3\)) under the Markov random graph model.
Suppose we label the edges as follows.
\begin{center}
\begin{tikzpicture}
  \path (90:1.5) edge [semithick, edge label=\footnotesize{1}, swap] (210:1.5)
  		(210:1.5) edge [semithick, edge label=\footnotesize{3}, swap] (330:1.5)
  		(330:1.5) edge [semithick, edge label=\footnotesize{2}, swap] (90:1.5);
  \fill (90:1.5)  node[circle, fill=mblue!50, draw=black, inner sep=3pt] {}
        (210:1.5) node[circle, fill=mblue!50, draw=black, inner sep=3pt] {}
        (330:1.5) node[circle, fill=mblue!50, draw=black, inner sep=3pt] {};
\end{tikzpicture}
\end{center}
The empty set corresponds to the spanning subgraph consisting of 3 isolated vertices. 
As each homomorphsim density is equal to zero, then we have that 
\begin{align*}
P(\varnothing)&\propto \exp\left(\frac 1 T (0)\right) = 1.
\end{align*}
If we consider the spanning subgraph consisting of 1 edge, then \(G_S\) has the degree sequence \((1, 1, 0)\) and hence \(t(S_1, G_S) = \frac{1+1+0}{3^2} =\frac 2 9\).
Therefore, we have the following:
\begin{align*}
P(\{1\}) = P(\{2\}) = P(\{3\}) &\propto \exp\left(\frac{2}{9T} \beta_1 \right).
\end{align*}
Now, if we take any two edges and form the spanning subgraph, we always get a \(2\)-star. Then, \(G_S\) has degree sequence \((2, 1, 1)\) and so we compute the edge and 2-star homomorphism densities as:
\begin{align*}
t(S_1, G_S) & = \frac{2 + 1 +1}{3^2} = \frac{4}{9}\\
t(S_2, G_S) & = \frac{2^2 + 1^2 + 1^2}{3^3} = \frac{6}{27}.
\end{align*}
Thus, the probability of the spanning subgraph consisting of 2 edges is as follows:\footnote{We note that in this special case of \(K_3\), no matter which two edges we pick, it results in a 2-star and so every subset with two edges has the same probability.}.
\begin{align*}
P(\{1,2\})= P(\{1,3\})=P(\{2,3\})&\propto  \exp\left(\frac{4}{9T} \beta_1 + \frac{6}{27T} \beta_2\right).
\end{align*}
Finally, the spanning subgraph with 3 edges is all of \(K_3\) which has degree sequence \((2,2,2)\) and contains 1 triangle.
Hence, the corresponding homomorphism densities are as follows:
\begin{align*}
t(S_1, G_S) & = \frac{2 + 2 +2}{3^2} = \frac{6}{9}\\
t(S_2, G_S) & = \frac{2^2 + 2^2 + 2^2}{3^3} = \frac{12}{27}\\
t(C_3, G_S) & = \frac{6\cdot 1 }{3^3} = \frac{6}{27}.
\end{align*}
Therefore, the probability of the subset with all 3 edges is equal to the following:
\begin{align*}
 P(\{1,2,3\}) &\propto  \exp\left(\frac{6}{9T} \beta_1+ \frac{12}{27T}\beta_2 + \frac{6}{27T}\beta\right).
\end{align*}
\end{ex}


\section{At the intersection of probability and geometry} 
\label{sec:at_the_intersection_of_probability_and_geometry}
In order to describe the aforementioned \textit{nice properties} of exponential random graphs, we make use of a well-developed dictionary between probability distributions and polynomials. 
This was first described in groundbreaking work in 2009 (see \cite{bbl}). 
Let \(P\) be a discrete probability distribution on the power set \(\mathcal P(E)\). 
\begin{defn}
 	The \textbf{generating polynomial of \(P\)}, denoted \(g_P \in \R[\mathbf x^E]\coloneqq \R[x_e : e\in E]\) is
 	\begin{align*}
 		g_P(\mathbf x)&\coloneqq \sum_{S\seq E} P(S) \mathbf x^S
 	\end{align*}
 	where \(\mathbf x ^S \coloneqq \prod_{e\in S} x_e\).
 \end{defn} 
We start with introducing \textit{Erd\H{o}s-R\'enyi} graphs and their corresponding generating polynomials.
\begin{defn}\label{def:bernoulli}
Let \(G=(V,E)\) be a finite graph and $\mathbb{B}:\{0,1\}^{E}\to \R$ be the probability distribution on the set of induced subgraphs of $G$ where an edge $e\in E$ is chosen independently with probability $p_e \in [0,1]$. This is named the \textbf{Bernoulli distribution} (or variant of the \textit{Erd\H{o}s-R\'enyi} model)\footnote{this is sometimes called a \textit{product measure} on $\{0,1\}^{E}$ in certain contexts (such as \cite{bbl}). This is also briefly described in \cite{amini}.} on $G$.
\end{defn}

For ease of notation, let \(\overline S \coloneqq E\setminus S\) denote the complement of the subset \(S\) in \(E\).
\begin{ex}\label{ex:bernoulligen} Let $S\subseteq E$ be an arbitrary subset. As each edge is chosen independently with some fixed probability, the probability of the subset (or spanning subgraph) $S$ is $\mathbb B(S)=\prod_{e\in S}p_e \prod_{e\in \overline S}(1-p_e)$. Then, the generating polynomial is 
\[g_{\mathbb{B}}(\mathbf{x})= \sum_{S\subseteq E} \prod_{e\in S} p_e \prod_{e\in \overline S}(1-p_e) \mathbf{x}^S.\] 
By definition of $\mathbf{x}^S$, then we have the following simplification:
\begin{align*}
g_{\mathbb{B}}(\mathbf{x})&= \sum_{S\subseteq E} \prod_{e\in S} p_ex_e \prod_{e\in \overline S}(1-p_e)
\end{align*}
\begin{equation}\label{eqn:bernoullifactor}
g_{\mathbb B}(\mathbf x) = \prod_{e \in E} (p_ex_e + (1-p_e)).
\end{equation}
We will see later that this has a desirable property (see Proposition \ref{prop:bern-SR}).
\end{ex}
There is also an irreducibility characterization of the generating polynomials of exponential random (hyper)graphs. This is Theorem 4.1 in \cite{mthesis}.


\subsection{Stable polynomials and negative dependence} 
\label{sub:stable_polynomials_and_negative_dependence}

\begin{defn}\label{def:stable}
A polynomial $g\in \C[\mathbf x^E]$ is said to be \textbf{stable} if either $g\equiv 0$ or it does not have any roots in the open upper half of the complex plane. That is, if \(\mathbf x = (\alpha_e : e\in E)\) and $\im(\alpha_e)>0$ for all $e$, then $g(\mathbf x)\neq 0$. Further, if $g$ has real coefficients, then it is said to be \textbf{real stable}. 
\end{defn}{}

This can be a difficult condition to check directly. 
There is a more tractable method for checking stability, according to Wagner.
\begin{prop}\label{prop:stabcond}\cite{wag}
A multiaffine polynomial $g\in \R[\mathbf x^E]$ is (real) stable if and only if for all $\mathbf{x}\in \R^{E }$ and \(i,j\in E\) such that $i\neq j$,
\[\frac{\partial g}{\partial x_i}(\mathbf{x}) \frac{\partial g}{\partial x_j} (\mathbf{x}) \geq \frac{\partial^2 g}{\partial x_i \partial x_j} (\mathbf{x}) g(\mathbf{x}). \] 
\end{prop}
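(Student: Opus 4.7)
Since $g$ is multiaffine, for any distinct $i, j \in E$ one can expand
\[ g(\mathbf{x}) = A + Bx_i + Cx_j + Dx_ix_j, \]
where $A, B, C, D \in \R[x_k : k \neq i, j]$. A direct computation of the partial derivatives produces the identity
\[ \frac{\partial g}{\partial x_i}\cdot \frac{\partial g}{\partial x_j} - g \cdot \frac{\partial^2 g}{\partial x_i \partial x_j} = BC - AD, \]
with the $x_i, x_j$ dependence cancelling entirely. Consequently, Wagner's inequality at a point $\mathbf{x}^* \in \R^E$ reduces to the condition $BC \geq AD$ evaluated at $\mathbf{y}^* := \mathbf{x}^*_{\setminus \{i,j\}}$. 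The whole argument then pivots on a \emph{bivariate base case}: a real multiaffine polynomial $f(z,w) = A + Bz + Cw + Dzw$ with real constants is real stable if and only if $BC \geq AD$. I would prove this by substituting $z = a + bi$ with $b > 0$ and computing the imaginary part of the unique root $w = -(A + Bz)/(C + Dz)$; rationalizing the denominator yields $\im(w) = -b(BC - AD)/|C + Dz|^2$, so $f$ has no root with $\im(w) > 0$ if and only if $BC - AD \geq 0$. The degenerate case $D = 0$ is handled by a short direct check.

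For the forward direction, suppose $g$ is real stable and fix an arbitrary $\mathbf{x}^* \in \R^E$. Substituting the real values $x_k = x^*_k$ for $k \notin \{i,j\}$ preserves real stability (a standard property of stable polynomials), so the resulting bivariate polynomial is real stable. The bivariate base case then gives $B(\mathbf{y}^*)C(\mathbf{y}^*) - A(\mathbf{y}^*)D(\mathbf{y}^*) \geq 0$, which by the identity above is exactly Wagner's inequality at $\mathbf{x}^*$.

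The reverse direction is the more delicate implication and I would handle it by induction on $|E|$. The cases $|E| \leq 2$ follow directly from the bivariate base case. For the inductive step, assume $g$ satisfies Wagner's inequality, and fix any $e \in E$ and $t \in \R$; because the identity above for indices $i, j \neq e$ involves $A, B, C, D$ only through variables other than $x_e$, the restriction $g|_{x_e = t}$ inherits Wagner's inequality in $|E| - 1$ variables and so is real stable by the inductive hypothesis. The main obstacle is to promote this family of real stable \emph{slices} to real stability of $g$ itself. I would close this gap via a Hurwitz-type argument: assume for contradiction that $g$ vanishes at some $\mathbf{z} \in \C^E$ with $\im(z_e) > 0$ for every $e$, deform one coordinate continuously down toward the real axis while holding the others fixed in the upper half-plane, and apply Hurwitz's theorem on limits of nonvanishing analytic functions to force a zero to appear on a lower-dimensional slice, contradicting the inductive stability of that slice. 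This passage from slicewise stability to joint stability is the technical heart of the argument.
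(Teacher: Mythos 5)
The paper does not prove this proposition at all: it is quoted from Wagner's survey and is originally Br\"and\'en's theorem, so your attempt can only be judged on its own merits. Your algebraic identity $\partial_i g\,\partial_j g - g\,\partial_i\partial_j g = BC - AD$ is correct, the bivariate base case computation of $\im(w)$ is right, and the forward direction (real specialization preserves stability, then apply the base case) is sound.

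The reverse direction, however, has a genuine gap exactly at the step you flag as the ``technical heart.'' Your inductive step uses only the inequalities for pairs $\{i,j\}$ with $e\notin\{i,j\}$ to conclude that every real slice $g|_{x_e=t}$ is stable, and then tries to lift slicewise stability to stability of $g$. That implication is false as a standalone statement: $g = 1 + x_1x_2$ has every real slice $1+tx_2$ (and $1+tx_1$) real-rooted, hence stable, yet $g(i,i)=0$. The Hurwitz deformation cannot rescue this, because as you push $z_e$ down to the real axis the zero of the remaining variables need not converge to a point of the \emph{open} polydisc of upper half-planes --- in the example the zero $z_2=-1/z_1$ drifts onto the real axis precisely as $z_1$ does, so no lower-dimensional slice ever acquires a zero in the open region and no contradiction appears. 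The information you discarded --- the inequalities $\Delta_{ej}\geq 0$ for pairs involving $e$ --- is exactly what rules out such examples, and any correct proof must use it. Br\"and\'en's argument does so by writing $g = g|_{x_e=0} + x_e\,\partial_e g$ and invoking a multivariate Hermite--Biehler/``proper position'' criterion for when such a pair assembles into a stable polynomial; some substitute for that step is unavoidable here.
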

The corresponding term for the probability distribution is \textit{strongly Rayleigh}.

\begin{defn}\label{def:SR}
A probability measure $P$ is said to be \textbf{strongly Rayleigh} when its corresponding generating polynomial $g_P\in \R[\mathbf x]$ is stable. 
\end{defn}
The strongly Rayleigh property completely characterizes the strongest form of negative dependence for probability distributions. 
This is detailed in \cite{bbl}.

\begin{prop}\label{prop:bern-SR}
The Bernoulli distribution on $G$ is strongly Rayleigh.
\end{prop}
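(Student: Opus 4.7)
The plan is to use the factorization of $g_{\mathbb{B}}$ given in Example \ref{ex:bernoulligen} together with Wagner's criterion (Proposition \ref{prop:stabcond}). By Equation \eqref{eqn:bernoullifactor}, we have
\[
g_{\mathbb{B}}(\mathbf{x}) = \prod_{e \in E} \bigl(p_e x_e + (1-p_e)\bigr),
\]
which is manifestly multiaffine, since each variable $x_e$ appears in exactly one linear factor. Thus Proposition \ref{prop:stabcond} applies, and it suffices to check the mixed-partial inequality at every real point $\mathbf{x} \in \R^E$ and every pair $i \neq j$ in $E$.

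Next, I would compute the relevant derivatives directly from the factored form. Writing $f_e \colonequals p_e x_e + (1-p_e)$, the partials are
\[
\partial_i g_{\mathbb{B}} = p_i \prod_{e \neq i} f_e, \qquad \partial_j g_{\mathbb{B}} = p_j \prod_{e \neq j} f_e, \qquad \partial_i \partial_j g_{\mathbb{B}} = p_i p_j \prod_{e \neq i,j} f_e.
\]
Multiplying the first two together gathers the factors $f_i$ and $f_j$ once each and every other $f_e$ twice, yielding $p_i p_j f_i f_j \prod_{e \neq i,j} f_e^2$. On the other hand, $(\partial_i \partial_j g_{\mathbb{B}}) \cdot g_{\mathbb{B}}$ gives exactly the same expression. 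Hence the Wagner inequality holds with equality,
\[
\partial_i g_{\mathbb{B}}(\mathbf{x}) \, \partial_j g_{\mathbb{B}}(\mathbf{x}) \;=\; \partial_i \partial_j g_{\mathbb{B}}(\mathbf{x}) \, g_{\mathbb{B}}(\mathbf{x}),
\]
at every real $\mathbf{x}$, so $g_{\mathbb{B}}$ is real stable and $\mathbb{B}$ is strongly Rayleigh.

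There is no real obstacle here: the proof is essentially a bookkeeping exercise once the factorization is in hand, and the boundary cases $p_e \in \{0,1\}$ collapse factors to constants without affecting the computation. If one wished to avoid the derivative computation entirely, an alternative would be to observe that each univariate factor $p_e x_e + (1-p_e)$ has no root in the open upper half-plane (its only root, when $p_e > 0$, is real), and then invoke the standard fact that products of stable polynomials are stable; however, since only Proposition \ref{prop:stabcond} is available in the excerpt, the direct verification above is the cleanest route.
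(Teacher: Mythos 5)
Your proof is correct, but it takes a different route from the paper. The paper's own argument is the one you mention only in passing at the end: each linear factor $p_e x_e + (1-p_e)$ is a real polynomial of degree at most one, hence real stable, and the product of real stable polynomials is real stable, so $g_{\mathbb B}$ is real stable and $\mathbb B$ is strongly Rayleigh. That closure-under-products fact is standard and the paper simply invokes it, making the proof a one-liner. Your version instead verifies Wagner's criterion (Proposition \ref{prop:stabcond}) by direct computation, and the computation is right: both sides of the inequality equal $p_i p_j f_i f_j \prod_{e \neq i,j} f_e^2$, so the Rayleigh inequality holds with equality. What your route buys is self-containedness (you use only the criterion quoted in the paper, not an external closure property) and a small bonus observation: the equality case reflects exactly the independence of the edges in the Bernoulli model, i.e., the distribution sits on the boundary of negative dependence rather than strictly inside it. What the paper's route buys is brevity and no case analysis at all (your remark about $p_e \in \{0,1\}$ collapsing factors is handled for free, since a nonzero constant is vacuously stable). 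Either argument is acceptable.
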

\begin{proof}\ 
Let $S\subseteq E$ be an arbitrary subset. By expression (\ref{eqn:bernoullifactor}), we have that \(g_{\mathbb B}(\mathbf x) = \prod_{e \in E} (p_ex_e + (1-p_e))\).
As $p_e\in \R$, then each polynomial $f_i\coloneqq p_ex_e + (1-p_e) \in \R[x_e]$ is real-rooted (i.e., real stable). The product of real stable polynomials is real stable and hence $g_{\mathbb{B}}$ is real stable. Therefore, $\mathbb{B}$ is strongly Rayleigh.
\end{proof}


\section{Strongly Rayleigh Markov random graphs} 
\label{sec:strongly_rayleigh_markov_random_graphs}

We can make conclusions about Markov random graphs and their stability. 
The complete proofs can be found in \cite{mthesis}.
The edge-triangle cubic Markov random graph is the model on \(G = K_3\) which only considers edges and triangles as \(\mathcal N\)-cliques.

\begin{prop}\label{prop:cubicmarkovedgetriangle} (\cite{mthesis}, Theorem 5.4)

The edge-triangle cubic Markov random graph is strongly Rayleigh if and only if the triangle parameter \(\beta = 0\).
	
\end{prop}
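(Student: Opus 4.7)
My plan is to leverage Wagner's multiaffine criterion (Proposition \ref{prop:stabcond}) after writing the generating polynomial explicitly from Example \ref{ex:cubicdef}. Since the edge-triangle model only uses edges and triangles as $\mathcal{N}$-cliques, I set $\beta_2 = 0$ everywhere in that example. Introducing the shorthand $a \coloneqq \exp(2\beta_1/(9T)) > 0$ and $b \coloneqq \exp(2\beta/(9T)) > 0$, the (unnormalized) generating polynomial becomes
\[
  g(x_1,x_2,x_3) \;=\; 1 + a(x_1+x_2+x_3) + a^2(x_1x_2 + x_1x_3 + x_2x_3) + a^3 b\, x_1x_2x_3.
\]
Note that $\beta = 0$ corresponds exactly to $b = 1$.

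For the easy direction ($\Leftarrow$), when $b = 1$ the polynomial factors as $g = (1 + a x_1)(1 + a x_2)(1 + a x_3)$, a product of real linear forms in disjoint variables. Since each factor is real-rooted and products of real stable polynomials are real stable, $g$ is real stable, so the distribution is strongly Rayleigh.

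For the converse ($\Rightarrow$), I will apply Proposition \ref{prop:stabcond}. By the $S_3$-symmetry of $g$ in $(x_1,x_2,x_3)$, it suffices to consider the pair $(i,j) = (1,2)$. Rather than analyze the inequality in full generality, I will restrict to the symmetric diagonal $x_1 = x_2 = x_3 = x \in \mathbb{R}$, which collapses both sides to polynomials in a single variable. A direct computation of the partial derivatives yields
\[
  \tfrac{\partial g}{\partial x_1}\,\tfrac{\partial g}{\partial x_2} - \tfrac{\partial^2 g}{\partial x_1 \partial x_2}\, g \;=\; a^3(1-b)\, x\,(1 + a x),
\]
where the quartic and cubic terms cancel and only the $x$ and $x^2$ contributions survive. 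Since the factor $x(1 + ax)$ is positive at $x = 1/a$ but negative at $x = -1/(2a)$, the nonnegativity required by Wagner's criterion forces $a^3(1-b) = 0$, and $a > 0$ then gives $b = 1$, i.e.\ $\beta = 0$.

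The one place that demands care is the cancellation in the symmetric-diagonal computation: one must expand $(a + 2a^2 x + a^3 b\, x^2)^2$ and $(a^2 + a^3 b\, x)\bigl(1 + 3ax + 3a^2 x^2 + a^3 b\, x^3\bigr)$ and track the coefficient of each power of $x$. This is the only step that is not conceptual, but it is routine and motivated by symmetry; the surviving discrepancy $a^3(1-b)\,x(1+ax)$ is precisely what makes the triangle parameter visible.
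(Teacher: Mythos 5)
Your proof is correct: I verified the diagonal computation, and the Wagner discrepancy is indeed $a^3(1-b)\,x(1+ax)$, which changes sign on $\mathbb{R}$ unless $b=1$, while the factorization $(1+ax_1)(1+ax_2)(1+ax_3)$ cleanly handles the converse. The paper itself defers the proof to the thesis, but your route---Wagner's criterion (Proposition \ref{prop:stabcond}) applied to the explicit cubic generating polynomial read off from Example \ref{ex:cubicdef} with $\beta_2=0$---is precisely the toolkit the paper sets up for this result, and your conclusion is consistent with specializing Proposition \ref{prop:cubicmarkovgen} to $\beta_2=0$.
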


\begin{prop}\label{prop:cubicmarkovgen} (\cite{mthesis}, Theorem 5.5)

The cubic Markov random graph is strongly Rayleigh if and only if the triangle parameter \(\beta \leq 0\) and the \(2\)-star parameter \(\beta_2\) is such that 
\[\beta_2 = \frac{9 T}{2} \ln\left(3 \exp \left(\frac{2}{9T} \beta\right) - 2 \right) - 2\beta.\]
	
\end{prop}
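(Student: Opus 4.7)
The plan is to apply Wagner's Rayleigh characterization (Proposition~\ref{prop:stabcond}) to the generating polynomial of the cubic Markov random graph computed in Example~\ref{ex:cubicdef}. Writing
\[
a \coloneqq \exp(\tfrac{2}{9T}\beta_1),\quad b \coloneqq \exp(\tfrac{4}{9T}\beta_1 + \tfrac{2}{9T}\beta_2),\quad c \coloneqq \exp(\tfrac{6}{9T}\beta_1 + \tfrac{4}{9T}\beta_2 + \tfrac{2}{9T}\beta),
\]
the generating polynomial (up to positive scaling, which does not affect stability) reads
\[
g_P(x_1,x_2,x_3) = 1 + a(x_1+x_2+x_3) + b(x_1x_2+x_1x_3+x_2x_3) + c\,x_1 x_2 x_3.
\]
Since $g_P$ is symmetric in its three variables, the Rayleigh inequalities for the three pairs of indices coincide, so it suffices to check the condition for a single pair, which I take to be $(1,2)$.

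I would then exploit the multiaffine decomposition $g_P = (b + c x_3)\,x_1 x_2 + (a + b x_3)(x_1 + x_2) + (1 + a x_3)$. A direct expansion shows that the Rayleigh difference collapses to a polynomial in $x_3$ alone:
\[
(\partial_1 g_P)(\partial_2 g_P) - g_P\,(\partial_{12}^2 g_P) \;=\; (b^2 - ac)\,x_3^2 + (ab - c)\,x_3 + (a^2 - b),
\]
so strong Rayleigh-ness is equivalent to this quadratic being nonnegative for every $x_3 \in \R$. Introducing $\alpha \coloneqq \exp(\tfrac{2}{9T}\beta)$ and $\gamma \coloneqq \exp(\tfrac{2}{9T}\beta_2)$, the three coefficients factor cleanly as $a^2(1-\gamma)$, $a^3\gamma(1 - \gamma\alpha)$, and $a^4\gamma^2(1-\alpha)$ respectively.

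The sign analyses of the leading coefficient and the constant term translate immediately into $\beta \leq 0$ and $\beta_2 \leq 0$. After canceling the common factor $a^6\gamma^2$, the discriminant condition on the quadratic becomes
\[
(1 - \gamma\alpha)^2 \leq 4(1-\alpha)(1-\gamma).
\]
I would solve this as a relation between $\alpha$ and $\gamma$, and then invert the exponential substitutions to recover the closed form $\beta_2 = \tfrac{9T}{2}\ln\bigl(3\exp(\tfrac{2}{9T}\beta) - 2\bigr) - 2\beta$. The main obstacle is the careful algebraic cleanup that isolates the correct root and the treatment of the degenerate boundary $\alpha = 1$, where $q$ drops in degree and forces $\gamma = 1$; this limiting case should be cross-checked against Proposition~\ref{prop:cubicmarkovedgetriangle} (where only the triangle and edge cliques are present) as a sanity check, and is where I expect the most delicate bookkeeping to occur.
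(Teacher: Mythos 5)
Your setup is exactly right: applying Wagner's criterion to $g_P = 1 + a\sum_i x_i + b\sum_{i<j}x_ix_j + c\,x_1x_2x_3$, using the symmetry to reduce to one pair of indices, and the identity $(\partial_1 g_P)(\partial_2 g_P) - g_P\,\partial_{12}^2 g_P = (a+bx_3)^2 - (1+ax_3)(b+cx_3) = (b^2-ac)x_3^2 + (ab-c)x_3 + (a^2-b)$ all check out. So do the factorizations $b^2-ac = a^4\gamma^2(1-\alpha)$, $ab-c = a^3\gamma(1-\gamma\alpha)$, $a^2-b = a^2(1-\gamma)$ (you listed the first and last of these in the wrong order, but the sign conclusions $\beta\le 0$ and $\beta_2\le 0$ are the right ones) and the reduced discriminant condition $(1-\gamma\alpha)^2 \le 4(1-\alpha)(1-\gamma)$.

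The gap is in the final step. The discriminant condition is an inequality, and for each fixed $\alpha\in(0,1)$ its solution set in $\gamma$ is the interval with endpoints $\bigl(3\alpha-2 \pm 2(1-\alpha)^{3/2}\bigr)/\alpha^2$: expanding gives $4(1-\alpha)(1-\gamma)-(1-\gamma\alpha)^2 = -\alpha^2\gamma^2+(6\alpha-4)\gamma+(3-4\alpha)$, a downward parabola in $\gamma$ whose discriminant is $16(1-\alpha)^3>0$, so the interval has positive length. You therefore cannot ``solve it as a relation'' and land on a single equality: the value $\gamma=(3\alpha-2)/\alpha^2$ appearing in the statement is precisely the midpoint of that interval and satisfies the discriminant condition \emph{strictly} whenever $\alpha<1$ (for instance $\alpha=0.9$, $\gamma=0.85$ makes the quadratic in $x_3$ positive definite, hence $g_P$ stable, while $\beta_2$ does not equal the displayed value). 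Since the two conditions you derived, $\alpha\le 1$ and the discriminant inequality, exhaust the content of Wagner's criterion for this symmetric cubic, there is no further constraint available to collapse the interval to a point; only at $\alpha=1$ does it degenerate to $\{\gamma=1\}$, which is the consistency check with Proposition~\ref{prop:cubicmarkovedgetriangle} you mention. As written, your plan would end with a characterization by a \emph{range} of admissible $\beta_2$ for each $\beta\le 0$, not the stated equality, so you need to either locate the missing source of rigidity or reconcile your (correct) computation with the statement before declaring the proof complete.
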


We can also outline necessary conditions for higher-order Markov random graphs. 
These necessary conditions make use of the negative dependence of strongly Rayleigh distributions (more specifically: the \textit{negative lattice condition}).

\begin{prop}\label{prop:neccondition1} (\cite{mthesis}, Theorem 5.6)

If \(P\) is a strongly Rayleigh Markov random graph on a finite graph with at least one triangle, then the triangle and \(2\)-star parameters \(\beta\) and \(\beta_2\) are such that \(\beta \leq - \beta_2\).
	
\end{prop}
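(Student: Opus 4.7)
The plan is to invoke the \emph{negative lattice condition} (NLC) for strongly Rayleigh measures established in \cite{bbl}: if $P$ is strongly Rayleigh on $E$, then $P(A)P(B) \geq P(A \cup B)P(A \cap B)$ for all $A, B \subseteq E$. Since $G$ contains at least one triangle by hypothesis, fix one such triangle with edge set $\{e_1, e_2, e_3\}$, and apply NLC to the pair $A = \{e_1\}$ and $B = \{e_2, e_3\}$. Then $A \cap B = \varnothing$ and $A \cup B = \{e_1, e_2, e_3\}$ is the triangle itself, which is the key feature that forces a triangle term to appear on only one side of the inequality.

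Next, I would substitute the probability formula from Definition~\ref{def:markovrandomgraph} into this inequality and take logarithms on both sides. Writing $n = |V(G)|$, I would evaluate the energy of each of the four spanning subgraphs separately: $G_A$ is a single edge, so only the $\beta_1$-term contributes (since $\max\deg(G_A)=1$); $G_B$ is a $2$-star, so $\beta_1$ and $\beta_2$ contribute; $G_{A \cap B}$ is empty and contributes no energy at all; and $G_{A \cup B}$ is an isolated triangle, contributing through $\beta_1$, $\beta_2$, and $\beta$. Adding up, the $\beta_1$-contributions telescope to zero via $\frac{2+4-6-0}{n^2}=0$, the $\beta_2$-terms combine to $\frac{0+6-12-0}{n^3} = -\frac{6}{n^3}$, and the $\beta$-term contributes $-\frac{6}{n^3}$. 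After clearing the positive factor $\frac{6}{n^3 T}$, the log-NLC collapses to $\beta + \beta_2 \leq 0$, which is exactly $\beta \leq -\beta_2$.

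The main obstacle will be the careful bookkeeping around the cutoff $\sum_{k=1}^{\max \deg(G_S)}$ in the Markov random graph formula: one must correctly identify which $\beta_k$-terms appear for each of the four configurations. It is essential that the single edge $G_A$ contributes no $\beta_2$-term and that the empty $G_{A \cap B}$ contributes no energy whatsoever; this asymmetry between the small and large configurations is exactly what produces the sharp $1{:}1$ ratio between $\beta$ and $\beta_2$ in the conclusion. A naive application of NLC that inserted $\beta_2$-terms for every configuration regardless of maximum degree would yield only the strictly weaker bound $\beta \leq -\frac{2}{3}\beta_2$ rather than the claimed $\beta \leq -\beta_2$.
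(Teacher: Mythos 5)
Your proof is correct and takes essentially the same route as the paper, which explicitly attributes Propositions \ref{prop:neccondition1} and \ref{prop:neccondition2} to the negative lattice condition for strongly Rayleigh measures; applying it to $A=\{e_1\}$, $B=\{e_2,e_3\}$ for a triangle $\{e_1,e_2,e_3\}$ and cancelling the $\beta_1$-terms yields $\frac{6}{n^3}(\beta+\beta_2)\leq 0$ exactly as you compute. Your care with the $\sum_{k=1}^{\max\deg(G_S)}$ cutoff is consistent with the paper's own convention in Example \ref{ex:cubicdef} (the single-edge configuration carries no $\beta_2$-term), so the sharp constant $\beta\leq-\beta_2$ is the right conclusion.
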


\begin{prop}\label{prop:neccondition2} (\cite{mthesis}, Theorem 5.7)

If \(P\) is a strongly Rayleigh Markov random graph on a finite graph \(G\) where \(G\) has at least one \(3\)-star, then the \(2\)-star and \(3\)-star parameters \(\beta_2\) and \(\beta_3\) are such that \(\beta_3 \leq -\frac 1 5 \vert V(G) \vert \beta_2\).
	
\end{prop}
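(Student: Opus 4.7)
The plan is to mirror the approach of Proposition \ref{prop:neccondition1}: exploit the \textit{negative lattice condition} (NLC) --- a standard consequence of the strong Rayleigh property --- which asserts $P(S)P(T) \geq P(S \cup T)\,P(S \cap T)$ for all $S, T \subseteq E$. Writing $P(S) \propto \exp(f(S)/T)$ with $f(S) \coloneqq \beta\, t(\triangle, G_S) + \sum_{k} \beta_k\, t(S_k, G_S)$, taking logs (and using $T > 0$) converts the NLC into the submodularity inequality
\[
f(S) + f(T) \;\geq\; f(S \cap T) + f(S \cup T),
\]
with the normalization constant cancelling.

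Let $v$ be the center of a $3$-star in $G$ with leaves $w_1, w_2, w_3$ and corresponding edges $e_i \coloneqq \{v, w_i\}$. I would apply the inequality above to incomparable subsets $S, T \subseteq \{e_1, e_2, e_3\}$. Two simplifications occur: the leaves $w_1, w_2, w_3$ form an independent set in every spanning subgraph built from the $e_i$, so no triangles appear and the $\beta$-term drops out; moreover, since the maximum degree never exceeds $3$ in such subgraphs, only $\beta_1, \beta_2, \beta_3$ contribute. The star densities $t(S_k, \cdot) = \sum_u \deg(u)^k / n^{k+1}$ (with $n \coloneqq |V(G)|$) are then easily read off from the degree sequences of $G_\emptyset$, $G_{\{e_i\}}$, $G_{\{e_i, e_j\}}$, and $G_{\{e_1, e_2, e_3\}}$.

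A natural first attempt is $(S, T) = (\{e_1\}, \{e_2, e_3\})$, giving $S \cap T = \emptyset$ and $S \cup T$ equal to the $3$-star; edge-count conservation forces the $\beta_1$-coefficient to vanish, leaving a two-term inequality of the form $\alpha\, \beta_2 / n^3 + \gamma\, \beta_3 / n^4 \leq 0$ with explicit positive constants $\alpha, \gamma$, which rearranges to $\beta_3 \leq -(\alpha/\gamma)\, n\, \beta_2$. If this choice already produces $\alpha/\gamma = 1/5$, we are done; otherwise I would take a nonnegative combination of the submodularity inequalities coming from two or three symmetric pairs (for instance $(\{e_1\}, \{e_2\})$ and $(\{e_1, e_2\}, \{e_1, e_3\})$) to isolate the sharp constant.

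The main obstacle is identifying the correct combination of NLC instances that yields exactly $-\tfrac{1}{5}|V(G)|$. The factor of $|V(G)|$ is structural: the $S_3$-density is normalized by an extra power of $n$ relative to the $S_2$-density, so cancelling the density factors necessarily leaves one copy of $n$ in the final bound. The combinatorial bookkeeping across the chosen pairs, rather than the probabilistic input, is where the sharp constant must be pinned down.
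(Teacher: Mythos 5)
Your proposal matches the paper's approach: these necessary conditions are stated to follow from the negative lattice condition applied to the Gibbs weights on configurations supported inside a single $\mathcal N$-clique, exactly as you describe, and your reduction to the log-submodularity inequality for $f$ is the right reformulation. Your ``natural first attempt'' $(S,T)=(\{e_1\},\{e_2,e_3\})$ already closes the argument: the degree sequences give $f(\{e_1\})=2\beta_1/n^2$, $f(\{e_2,e_3\})=4\beta_1/n^2+6\beta_2/n^3$, $f(\varnothing)=0$, and $f(\{e_1,e_2,e_3\})=6\beta_1/n^2+12\beta_2/n^3+30\beta_3/n^4$, so the $\beta_1$-terms cancel and the inequality reduces to $0\geq 6\beta_2/n^3+30\beta_3/n^4$, i.e.\ $\beta_3\leq -\tfrac15 n\,\beta_2$, with no further combination of NLC instances needed.
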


Further still, we asserted that the edge parameter has no impact on the stability of the model.

\begin{prop} (\cite{mthesis}, Theorem 6.7)

If \(P\) is a Markov random graph on a finite graph, then the edge parameter \(\beta_1\) does not affect the stability of \(g_P\).
	
\end{prop}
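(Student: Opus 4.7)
The plan is to show that changing $\beta_1$ amounts to rescaling the variables $x_e$ by a common positive constant (together with multiplication by a positive constant from renormalization), and that stability is invariant under such operations.

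First I would isolate the dependence of $P(S)$ on $\beta_1$. Since $t(S_1, G_S) = \frac{\vert\Hom(S_1,G_S)\vert}{\vert V(G)\vert^2} = \frac{\sum_v \deg_{G_S}(v)}{\vert V(G)\vert^2} = \frac{2\vert S\vert}{\vert V(G)\vert^2}$, the factor of $P(S)$ corresponding to $\beta_1$ is
\[ \exp\left(\frac{\beta_1}{T}\cdot\frac{2\vert S\vert}{\vert V(G)\vert^2}\right) = \alpha^{\vert S\vert}, \quad \alpha \colonequals \exp\left(\frac{2\beta_1}{T\vert V(G)\vert^2}\right) > 0. \]
Writing $Q(S)$ for the (unnormalized) contribution of the remaining parameters $\beta,\beta_2,\beta_3,\dots$, we have $P(S) = Z^{-1}\, Q(S)\, \alpha^{\vert S\vert}$ for some normalizing constant $Z > 0$.

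Next I would observe that $\alpha^{\vert S\vert}\mathbf{x}^S = \prod_{e\in S}(\alpha x_e) = (\alpha\mathbf{x})^S$, so
\[ g_P(\mathbf{x}) = Z^{-1}\sum_{S\seq E} Q(S)(\alpha\mathbf{x})^S = Z^{-1}\, g_{P_0}(\alpha\mathbf{x}), \]
where $P_0$ denotes the unnormalized measure with $\beta_1 = 0$ (i.e., $P_0(S) \propto Q(S)$). Thus varying $\beta_1$ produces $g_P$ from the polynomial $g_{P_0}$ by a positive rescaling of all variables together with multiplication by a positive constant.

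Finally I would invoke the elementary fact that stability is preserved under these operations: if $f\in\mathbb{R}[\mathbf{x}^E]$ is stable and $c,\alpha > 0$, then $c\cdot f(\alpha\mathbf{x})$ is stable, because $\im(x_e)>0$ implies $\im(\alpha x_e) > 0$, so no root of $f(\alpha\mathbf{x})$ can lie in the open upper half-plane of each variable. Hence $g_P$ is stable if and only if $g_{P_0}$ is stable, which shows that $\beta_1$ plays no role in determining stability. The argument is entirely a matter of bookkeeping; the only genuine ingredient is the edge-density identity $t(S_1, G_S) = 2\vert S\vert/\vert V(G)\vert^2$, which turns the $\beta_1$-contribution into a pure rescaling. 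No real obstacle is expected.
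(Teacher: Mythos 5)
Your argument is correct and is essentially the expected one: since \(t(S_1,G_S)=2\vert S\vert/\vert V(G)\vert^2\) is linear in \(\vert S\vert\), the \(\beta_1\)-term contributes a factor \(\alpha^{\vert S\vert}\) with \(\alpha>0\), which amounts to the substitution \(x_e\mapsto \alpha x_e\) together with a positive scalar multiple, and both operations preserve (non)stability because \(\im(x_e)>0\) iff \(\im(\alpha x_e)>0\). This is the same uniform external-field/rescaling argument underlying the cited result, so no gap.
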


Although stability is powerful, results that characterize the stability of generating polynomials are scarce. 
Instead, we now focus our attention on a more tractable condition that still implies some form of negative dependence: the \textit{Lorentzian} property. 


\section{Lorentzian Markov random graphs} 
\label{sec:lorentzian_markov_random_graphs}
The Lorentzian property was first introduced in 2020 in \cite{bh}.
We will focus on the Lorentzian property, but note that for any interested readers, \cite{algv1} defined the equivalent \textit{completely log-concave} property.

\begin{defn}\label{def:supp}
The \textbf{support} of a polynomial \(f\) is the subset \(\text{supp}(f)\seq \N^n\) defined by:
\[ \text{supp}(f) \coloneqq \{\alpha \in \N^n : c_\alpha \neq 0\}\]
where \(f = \sum_{\alpha \in \N^n} \frac{c_\alpha}{\alpha!} z^\alpha\) and \(\alpha! \coloneqq \prod_{i=1}^n \alpha_i !\).
\end{defn}

\begin{defn}(\cite{bh})\label{def:Mconvex}
A subset \(J\seq \N^n\) is \textbf{\(M\)-convex}\footnote{The term \textit{\(M\)-convex} stems from its connection to the bases of a \textit{matroid}.} if it satisfies any of the following equivalent conditions:

\begin{enumerate}
\item (\textit{Exchange property}). For any \(\alpha, \beta \in J\) and any index \(i\) such that \(\alpha_ i > \beta_i\), there is some index \(j\) such that
\[ \alpha_j < \beta_j \text{ and } \alpha - e_i + e_j \in J.\]
\item (\textit{Symmetric exchange property}). For any \(\alpha, \beta \in J\) and any index \(i\) such that \(\alpha_i > \beta_i\), there is some index \(j\) such that
\[ \alpha_j < \beta_j \text{ and } \alpha - e_i + e_j \in J \text{ and } \beta - e_j + e_i \in J.\]
\item \(J\) is the set of all lattice points of a generalized permutohedron.
\end{enumerate}
\end{defn}

Let \(H_n^d \sub \R[x_1, \dots, x_n]\) be the set of degree \(d\) homogeneous polynomials. Let \(M_n^d\seq H_n^d\) be the set of all degree \(d\) homogeneous polynomials whose supports are \(M\)-convex.
\begin{defn}(\cite{bh})\label{def:lorentz}
Let \(L_n^2 \seq H_n^2\) be the (closed) subset of quadratic forms with nonnegative coefficients that have at most one positive eigenvalue. For \(d>2\), define
\[ L_n^d \coloneqq \{f\in M_n^d : {}\partial_i f \in L_n^{d-1} \text{ for all } i\in [n]\}.\]
Polynomials in \(L_n^d\) are called \textbf{Lorentzian polynomials}.
\end{defn}

\begin{defn}(\cite{bh})\label{def:lorentzprob}
A discrete probability measure \(P\) on \(\mathcal P(E)\) is \textbf{Lorentzian} if the homogenization of its generating polynomial is a Lorentzian polynomial. This homogenization is defined as:
\[ h_P (z,x_1, \dots, x_n) \coloneqq z^n g_P\left(\frac{x_1}{z}, \dots, \frac{x_n}{z}\right).\]
\end{defn}

As promised, this condition is more tractable: To determine when an exponential random graph is Lorentzian, we determine when the partial derivatives that result in quadratic forms have exactly one positive eigenvalue\footnote{Some of the details that make this assertion possible are omitted here for sake of brevity. Interested readers can find more details about this in Chapter 6 of \cite{mthesis}.}.

It is also a weaker condition, as proven in \cite{bh}.

\begin{prop}\label{prop:bhstableimplieslorentzian} (\cite{bh}, Proposition 4.24)

If \(P\) is strongly Rayleigh, then \(P\) is Lorentzian.
	
\end{prop}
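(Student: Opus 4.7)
The plan is to unpack the inductive definition of Lorentzian applied to the homogenization $h_P$. Being strongly Rayleigh gives that $g_P$ is a stable multiaffine polynomial with nonnegative coefficients, so the target is to verify that $h_P \in L_{n+1}^n$ in the notation of Definition \ref{def:lorentz}, where $n = \vert E \vert$. Unwinding that definition requires checking $M$-convexity of the support of every iterated partial derivative of $h_P$, together with the correct spectral condition once the inductive process bottoms out in degree two.

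First I would show that stability passes from $g_P$ to $h_P$. For a multiaffine $g_P$, homogenization preserves the half-plane property; this is a standard fact in the theory of multivariate stable polynomials that can also be verified directly by translating the Rayleigh inequality of Proposition \ref{prop:stabcond} between $g_P$ and $h_P$. With $h_P$ stable and nonnegative, I would then invoke the structural theorem that the support of a stable polynomial with nonnegative coefficients is $M$-convex (see \cite{bh}). Because stability is preserved under each operator $\partial_{x_i}$, every iterated partial derivative of $h_P$ remains stable with nonnegative coefficients and inherits $M$-convex support from the same theorem; this supplies the inductive step in the definition of $L_{n+1}^n$.

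This reduces the problem to the base case: every iterated partial $\partial^{\alpha} h_P$ with $\vert \alpha \vert = n - 2$ must lie in $L_{n+1}^2$. Such a derivative is a stable quadratic form with nonnegative coefficients; writing it as $q(\mathbf{y}) = \tfrac{1}{2}\, \mathbf{y}^{\mathsf T} A\, \mathbf{y}$, Proposition \ref{prop:stabcond} forces $a_{ii} a_{jj} - a_{ij}^2 \le 0$ for all $i \ne j$. A short Perron-Frobenius / Cauchy-interlacing argument for nonnegative symmetric matrices satisfying this constraint then shows that $A$ has at most one positive eigenvalue, which is precisely the condition for $q \in L_{n+1}^2$. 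Chaining this base case with the inductive machinery yields $h_P \in L_{n+1}^n$, and hence that $P$ is Lorentzian.

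The hardest step will be the invocation of the $M$-convex support theorem: this is the deep structural input of the whole argument, and care must be taken that it applies not only to the multiaffine $g_P$ but also to the non-multiaffine derivatives of $h_P$ that arise when differentiating in the homogenizing variable $z$. The degree-two base case is the other delicate point, but it is essentially linear algebra and should yield directly once the Rayleigh inequalities are in place.
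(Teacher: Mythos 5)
The paper offers no proof of this statement---it is quoted directly from \cite{bh}, Proposition 4.24---so your proposal is measured against the Br\"and\'en--Huh argument, whose architecture you reproduce correctly in outline: homogenize, observe that stability and nonnegativity of coefficients survive both homogenization and differentiation, invoke $M$-convexity of supports of homogeneous stable polynomials with nonnegative coefficients (which does apply to the non-multiaffine derivatives in the homogenizing variable, so the worry you flag there is resolvable), and finish with the quadratic case. One caution: Proposition \ref{prop:stabcond} is an equivalence only for \emph{multiaffine} polynomials, and neither $h_P$ nor its degree-two derivatives are multiaffine, so the ``direct verification'' you suggest for the homogenization step is not available; you must lean on the cited fact from \cite{bbl} that a multiaffine polynomial with nonnegative coefficients is stable if and only if its homogenization is.

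The genuine gap is the base case. From stability of $q=\tfrac12\,\mathbf y^{\mathsf T}A\,\mathbf y$ you extract only the coefficient inequalities $a_{ii}a_{jj}\le a_{ij}^2$, and these together with nonnegativity of the entries do \emph{not} imply that $A$ has at most one positive eigenvalue. For example,
\[ A=\begin{pmatrix} 0 & 1 & 0\\ 1 & 0 & 0\\ 0 & 0 & 1\end{pmatrix} \]
has nonnegative entries, satisfies $a_{ii}a_{jj}\le a_{ij}^2$ for every $i\ne j$, yet has eigenvalues $1,1,-1$. (Its form $y_1y_2+\tfrac12 y_3^2$ is of course not stable, but your argument uses no more of stability than those pairwise coefficient inequalities, so it cannot exclude this matrix, and no Perron--Frobenius or interlacing argument can rescue it.) The correct base case uses the full strength of stability: since $q$ has nonnegative coefficients and is nonzero, $q(\mathbf e)>0$ for $\mathbf e$ in the positive orthant, stability forces $t\mapsto q(\mathbf x+t\mathbf e)$ to be real-rooted for every real $\mathbf x$ (hyperbolicity with respect to $\mathbf e$), and the classical signature argument for hyperbolic quadratic forms then yields at most one positive eigenvalue. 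Replacing your base-case step with this hyperbolicity argument---which is the substance of \cite{bh}, Proposition 2.2---closes the proof.
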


\subsection{Lorentzian Markov random graphs} 
\label{sub:lorentzian_cubic_markov_random_graphs}
We have analogous results about Lorentzian Markov random graphs as we did for the strongly Rayleigh conditions.

\begin{prop}\label{prop:cubicedgetrianglelorentz} (\cite{mthesis}, Theorem 6.1)

The cubic edge-traingle Markov random graph is Lorentzian if and only if the triangle parameter \(\beta \leq 0\).
	
\end{prop}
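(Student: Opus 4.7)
The plan is to work directly from Definition \ref{def:lorentz}. Since the homogenization $h_P$ has degree three, verifying $h_P\in L_4^3$ reduces to two checks: (i) $\mathrm{supp}(h_P)$ is $M$-convex, and (ii) for each variable $y\in\{z,x_1,x_2,x_3\}$, the quadratic form $\partial_y h_P$ has nonnegative coefficients and at most one positive eigenvalue. Setting $\beta_2=0$ in Example \ref{ex:cubicdef} and writing $a \colonequals \exp\!\left(\frac{2\beta_1}{9T}\right)$ and $c \colonequals \exp\!\left(\frac{2\beta}{9T}\right)$, I would start from
\[ h_P(z,x_1,x_2,x_3) = z^3 + a z^2(x_1+x_2+x_3) + a^2 z(x_1x_2+x_1x_3+x_2x_3) + a^3 c\, x_1 x_2 x_3. \]

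Both $a$ and $c$ are strictly positive, so the nonnegativity half of (ii) is automatic. For $M$-convexity the support is $\{(\alpha_0,\alpha_1,\alpha_2,\alpha_3)\in\N^4 : \alpha_0+\alpha_1+\alpha_2+\alpha_3 = 3 \text{ and } \alpha_i\in\{0,1\} \text{ for } i\geq 1\}$, and a short case split on whether the over-represented index is $0$ or in $\{1,2,3\}$ confirms the exchange axiom of Definition \ref{def:Mconvex}. The partial $\partial_z h_P$ is handled without any constraint on the parameters: exploiting the $S_3$-symmetry in $(x_1,x_2,x_3)$, its matrix splits according to the isotypic decomposition, giving a double eigenvalue $-a^2/2$ on the trace-zero subspace of the $x$-variables, and, on the $2$-dimensional invariant plane spanned by $(1,0,0,0)$ and $(0,1,1,1)$, a $2\times 2$ block with trace $3+a^2$ and determinant $0$, hence eigenvalues $0$ and $3+a^2$. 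Thus $\partial_z h_P$ always has exactly one positive eigenvalue.

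The entire constraint on $\beta$ therefore comes from $\partial_{x_1} h_P = az^2 + a^2 z(x_2+x_3) + a^3 c\, x_2 x_3$ (and, by symmetry, $\partial_{x_2}$ and $\partial_{x_3}$). The $x_1$-row of the associated matrix vanishes, contributing a trivial zero eigenvalue; on the remaining $(z,x_2,x_3)$-block, the antisymmetric direction $(0,1,-1)$ produces the eigenvalue $-a^3c/2$, while the restriction to the symmetric $2$-plane $\mathrm{span}\{(1,0,0),(0,1,1)\}$ has trace $a+\tfrac{a^3 c}{2}$ and determinant $\tfrac{a^4(c-1)}{2}$. The main subtlety, and what delivers both directions of the biconditional simultaneously, is that this trace is strictly positive: among the two remaining roots at most one can be positive if and only if their product is nonpositive, i.e.\ $c\leq 1$, i.e.\ $\beta\leq 0$. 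This yields necessity and sufficiency in one stroke, completing the plan.
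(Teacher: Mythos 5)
Your proposal is correct: the homogenization, the $M$-convexity of the support, and the eigenvalue counts for $\partial_z h_P$ and $\partial_{x_i} h_P$ (trace $a+\tfrac{a^3c}{2}>0$, determinant $\tfrac{a^4(c-1)}{2}$, so two positive eigenvalues exactly when $c>1$, i.e.\ $\beta>0$) all check out. This is essentially the approach the paper prescribes --- it defers the proof to \cite{mthesis} but explicitly states that one verifies the Lorentzian property by determining when the quadratic partial derivatives have exactly one positive eigenvalue, which is precisely what you carry out (with the $S_3$-symmetry serving as a clean bookkeeping device).
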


\begin{prop}\label{prop:cubiclorentzgen} (\cite{mthesis}, Theorem 6.2)

The cubic Markov random graph is Lorentzian if and only if the \(2\)-star and triangle parameters \(\beta_2, \beta\) are such that \(\beta_2, \beta \leq 0\).
	
\end{prop}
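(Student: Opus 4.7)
The plan is to verify the two defining conditions of Definition \ref{def:lorentz} directly for the degree-$3$ homogenization $h_P(z,x_1,x_2,x_3)$ of the cubic Markov random graph's generating polynomial on $K_3$. Using the probabilities from Example \ref{ex:cubicdef}, I would first write
\[ h_P \;=\; a_0 z^3 + a_1 z^2(x_1+x_2+x_3) + a_2\, z(x_1 x_2+x_1 x_3+x_2 x_3) + a_3\, x_1 x_2 x_3, \]
where $a_0,\dots,a_3$ are the strictly positive exponential weights indexed by the number of edges in the spanning subgraph. Since every $a_k>0$, the support of $h_P$ is the full set of lattice points of the standard $3$-simplex in $\N^4$, i.e.\ the vertex set of a generalized permutohedron, so the $M$-convexity condition of Definition \ref{def:lorentz} is automatic. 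It then remains to show that each first partial $\partial_i h_P$ belongs to $L_4^2$: nonnegativity of coefficients is immediate from $a_k>0$, leaving the at-most-one-positive-eigenvalue condition on its Hessian.

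By the $S_3$-symmetry permuting the three edges of $K_3$, it suffices to analyze the Hessians of $\partial_z h_P$ and $\partial_{x_1} h_P$. For $\partial_{x_1} h_P = a_1 z^2 + a_2 z(x_2+x_3) + a_3 x_2 x_3$, I would block-diagonalize under the residual involution swapping $x_2 \leftrightarrow x_3$. The sign-eigenline spanned by $(0,0,1,-1)$ contributes the negative eigenvalue $-a_3$, the $x_1$-axis contributes $0$, and on the remaining two-dimensional invariant subspace the restriction has trace $2a_1+a_3>0$ and determinant $2(a_1 a_3 - a_2^2)$. Trace positivity forces at least one positive eigenvalue on this block, so the at-most-one condition becomes $a_2^2 \geq a_1 a_3$. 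An analogous decomposition of the Hessian of $\partial_z h_P$ under the full $S_3$-action gives the eigenvalue $-a_2$ with multiplicity two on the standard representation, while the two-dimensional trivial block has trace $6a_0+2a_2>0$ and determinant $12(a_0 a_2 - a_1^2)$, yielding the condition $a_1^2 \geq a_0 a_2$.

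To finish, I would take logarithms of both inequalities and substitute the explicit formulas for $a_k$ from Example \ref{ex:cubicdef}. The edge parameter $\beta_1$ cancels throughout (consistent with Theorem 6.7), and the two conditions collapse to $\beta \leq 0$ and $\beta_2 \leq 0$ respectively, proving the biconditional. The main obstacle is the eigenvalue analysis of the two Hessians, but the full $S_3$-symmetry reduces it to the spectra of two $2\times 2$ matrices; the sharpness of the biconditional is built into this reduction, since on each trivial block the strictly positive trace means that any violation of the determinant inequality would immediately produce a \emph{second} positive eigenvalue.
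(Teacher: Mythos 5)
Your proposal is correct and follows exactly the route the paper indicates for its Lorentzian results: homogenize the generating polynomial, then check \(M\)-convexity of the support and verify that each quadratic partial has nonnegative coefficients and at most one positive eigenvalue, which your \(S_3\)-symmetric block decomposition correctly reduces to the determinant conditions \(a_2^2 \geq a_1 a_3\) and \(a_1^2 \geq a_0 a_2\), i.e.\ \(\beta \leq 0\) and \(\beta_2 \leq 0\), with \(\beta_1\) cancelling. One small correction: the support of \(h_P\) is not the full set of lattice points of the standard \(3\)-simplex (there is no \(x_i^3\) monomial); it is the vertex set of the generalized permutohedron \(\sum_{i=1}^{3}\mathrm{conv}(e_0,e_i)\), so the \(M\)-convexity you invoke does hold, just not for the reason you state.
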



\section{Applying the theory to real networks} 
\label{sub:algorithms}

To make conclusions about higher-order Markov random graphs, we designed and implemented an algorithm to identify the Lorentzian property of the Markov random graph on \(K_n\). 
Details of the algorithm and its results are in Chapter 7 of \cite{mthesis}.

Now, we outline how we can use exponential random graphs and the strongly Rayleigh property to make conclusions about real-world networks. 
Historically, Markov random graphs (and more generally: \textit{exponential} random graphs) are used to model social networks (see \cite{pattisonrobins,rpkl,jackson}). 
To make conclusions about the strongly Rayleigh property in social networks, we look at some commonly used social network datasets.

First, we consider a social network of business connections among 16 families from the 15th century in Florence, Italy (as discussed in \cite{rpkl}). 
We call this social network the Medici business social network. 
This is one of the first-known examples of a social network.
\begin{figure}[H]\label{fig:medici}

\centering
         \includesvg[width=3in]{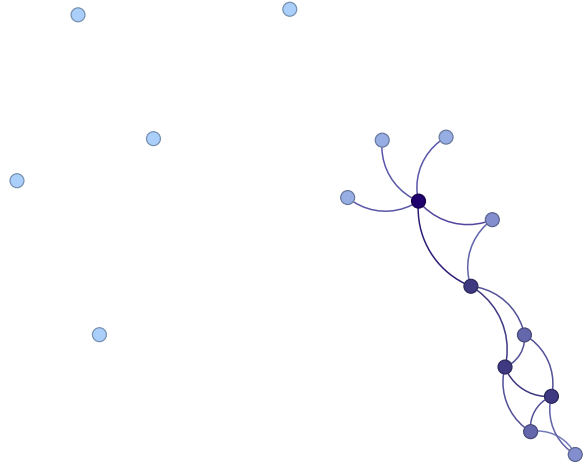}

\vspace*{5mm}
\caption{Medici business social network. Vertex-colouring is based on the degree distribution.}
\end{figure}

After conducting Markov chain Monte Carlo (MCMC) maximum likelihood estimation for the Markov random graph parameters (using the \href{https://cran.r-project.org/web/packages/ergm/index.html}{\texttt{ergm}} package in R), we obtain the following estimates for the parameters:

\begin{align*}
	\beta_1 & = -4.2858\\
	\beta_2 &= 1.0611\\
	\beta_3 & = -0.6339 \\
	\beta & = 1.3126
\end{align*}

In particular, we notice that \(\beta = 1.3126 > -(1.0611) = - \beta_2\). By Proposition \ref{prop:neccondition1}, then the Markov random graph model on this social network is \textit{not} strongly Rayleigh. 

\begin{prop}\label{prop:mediciSR}
	The Markov random graph model on the Medici business social network is not strongly Rayleigh.
\end{prop}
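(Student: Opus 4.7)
The plan is to prove Proposition \ref{prop:mediciSR} by a direct application of the contrapositive of Proposition \ref{prop:neccondition1}. Recall that Proposition \ref{prop:neccondition1} gives the necessary condition that, for any strongly Rayleigh Markov random graph on a finite graph containing at least one triangle, the triangle and \(2\)-star parameters must satisfy \(\beta \leq -\beta_2\). Equivalently, if \(\beta > -\beta_2\) on a host graph with a triangle, then the model fails to be strongly Rayleigh.

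First, I would verify the hypothesis of Proposition \ref{prop:neccondition1} by checking that the Medici business social network contains at least one triangle. This is a purely combinatorial check on the underlying graph in Figure \ref{fig:medici} (inspection suffices; one can exhibit a specific triangle among the families). Second, I would take as input the MCMC maximum likelihood parameter estimates obtained from the \texttt{ergm} package, in particular \(\beta_2 = 1.0611\) and \(\beta = 1.3126\).

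Third, I would observe that
\[
\beta = 1.3126 > -1.0611 = -\beta_2,
\]
so the necessary inequality \(\beta \leq -\beta_2\) from Proposition \ref{prop:neccondition1} is violated. By the contrapositive, the Markov random graph on the Medici business social network with the estimated parameters cannot be strongly Rayleigh, which is the desired conclusion. The edge parameter \(\beta_1\) and the \(3\)-star parameter \(\beta_3\) play no role in this argument, since Proposition \ref{prop:neccondition1} depends only on \(\beta\) and \(\beta_2\).

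There is essentially no main obstacle: the proposition is a pure plug-and-check. The only subtlety worth spelling out is the triangle-existence hypothesis, since without it Proposition \ref{prop:neccondition1} does not apply; as long as the Medici network indeed contains a triangle, the proof reduces to a single numerical inequality.
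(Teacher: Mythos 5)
Your proposal is correct and follows exactly the argument the paper itself gives: plug the MCMC parameter estimates into the necessary condition of Proposition \ref{prop:neccondition1}, observe that \(\beta = 1.3126 > -1.0611 = -\beta_2\), and conclude by contrapositive. Your explicit note that one must first confirm the Medici network contains a triangle is a small but worthwhile addition that the paper leaves implicit.
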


\newpage
The second real-world network we analyze is the Sampson dataset. This is a network that displays community structure in a New England monastery with \(n=18\) vertices (this dataset is  discussed in \cite{frankstrauss} and \cite{snijdersmc}). 

\begin{figure}[H]\label{fig:sampson}

\centering
         \includesvg[width=3in]{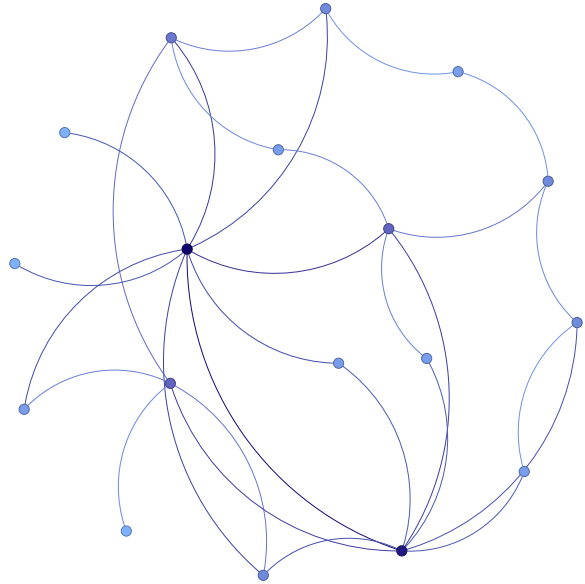}

\vspace*{5mm}{}
\caption{Sampson monastery social network. Vertex-colouring is based on the degree distribution.}
\end{figure}
The MCMC maximum likelihood estimation yields the following parameters for the Markov random graph model.
\begin{align*}
	\beta_1 & = -0.78 \\
	\beta_2 & = -0.05 \\
	\beta & = 0.35
\end{align*}
Similar to the Medici network, we note that \(\beta = 0.35 > -(-0.05) = - \beta_2\). By Proposition \ref{prop:neccondition1}, the Markov random graph model is not strongly Rayleigh.
\begin{prop}\label{prop:sampsonSR}
	The Markov random graph model on the Sampson monastery social network is not strongly Rayleigh.
\end{prop}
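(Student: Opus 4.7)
The plan is to apply Proposition \ref{prop:neccondition1} in contrapositive form, exactly as was done for the Medici network in Proposition \ref{prop:mediciSR}. The hypothesis of Proposition \ref{prop:neccondition1} requires that the underlying graph contain at least one triangle, so the first step is to verify that the Sampson monastery network has this property. This is evident from the community structure visible in Figure \ref{fig:sampson} (and is standard knowledge about the Sampson dataset, see \cite{frankstrauss,snijdersmc}), but I would state it explicitly, perhaps by exhibiting a concrete triangle from the network or citing a computation of the triangle count from the \texttt{ergm} package output.

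Once the triangle hypothesis is secured, the proof reduces to a one-line numerical check using the MCMC maximum likelihood estimates listed just above the statement. I would compute
\begin{equation*}
\beta + \beta_2 = 0.35 + (-0.05) = 0.30 > 0,
\end{equation*}
which is equivalent to $\beta > -\beta_2$. Since Proposition \ref{prop:neccondition1} asserts that \emph{every} strongly Rayleigh Markov random graph on a graph containing a triangle must satisfy $\beta \leq -\beta_2$, the contrapositive immediately yields that the Sampson Markov random graph model fails to be strongly Rayleigh.

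There is essentially no mathematical obstacle here; the substantive work has already been done in establishing Proposition \ref{prop:neccondition1} (which encodes the negative lattice condition) and in carrying out the MCMC maximum likelihood estimation. The only point that warrants any care is a methodological caveat: the estimates $\beta_1, \beta_2, \beta$ are sample estimates produced by MCMC, not exact parameter values, so strictly speaking the conclusion is about the fitted model rather than some platonic underlying distribution. I would briefly remark that this subtlety does not affect the conclusion because the strict inequality $\beta + \beta_2 = 0.30$ is well bounded away from zero relative to typical MCMC estimation error, and hence the violation of the necessary condition is robust.
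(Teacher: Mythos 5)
Your proposal is correct and matches the paper's own argument: the paper likewise observes that $\beta = 0.35 > -(-0.05) = -\beta_2$ and invokes Proposition \ref{prop:neccondition1} in contrapositive form. Your added care about the triangle hypothesis and the robustness of the MCMC estimates is a reasonable elaboration but does not change the route.
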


\newpage
Next, we consider a network of colleagues (work ties) at a law firm where there are \(n= 36\) lawyers (see \cite{lazega} and \cite{pattisonrobins}). 

\begin{figure}[H]\label{fig:lazega}

\centering
         \includesvg[width=3in]{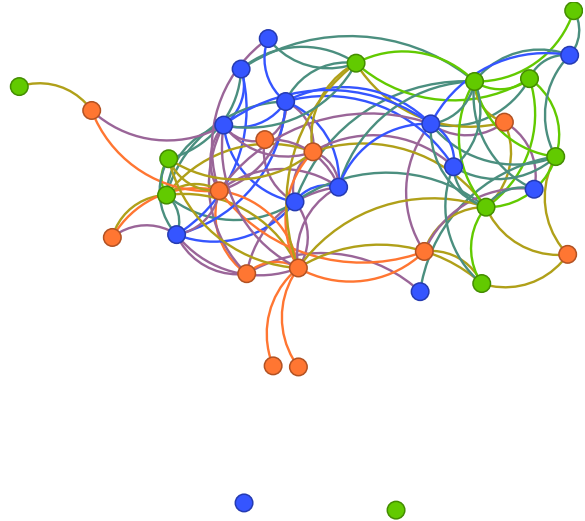}

\vspace*{2mm}{}
\caption{Lazega law firm social network. Vertex-colouring is based on clustering the lawyers' law schools.}
\end{figure}
MCMC maximum likelihood estimation results in the following parameter estimates.
\begin{align*}
	\beta_1 & = -2.79 \\
	\beta_2 & = -0.02 \\
	\beta & = 0.48
\end{align*}
Similar to the previous networks, we have that \(\beta = 0.48 > -(-0.02) = -\beta_2\).
Therefore, by Proposition \ref{prop:neccondition1}, the Markov random graph model is not strongly Rayleigh.

\begin{prop}\label{prop:lawfirm}
	The Markov random graph model on the Lazega law firm network is not strongly Rayleigh.
\end{prop}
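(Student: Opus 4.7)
The plan is to mimic exactly the strategy used in Propositions \ref{prop:mediciSR} and \ref{prop:sampsonSR}: invoke the contrapositive of Proposition \ref{prop:neccondition1}, which says that any strongly Rayleigh Markov random graph on a graph containing at least one triangle must satisfy $\beta \leq -\beta_2$. Since the MCMC MLE produced $\beta = 0.48$ and $\beta_2 = -0.02$, a one-line numerical comparison settles the matter.

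First I would verify the hypothesis of Proposition \ref{prop:neccondition1} is applicable, namely that the Lazega law firm graph contains at least one triangle. This is a structural fact about the dataset that can be read off from the visualization (or confirmed directly from the adjacency data) and is evidently true for a co-working network of $36$ lawyers, which is dense enough to contain many triangles. Second, I would substitute the estimated parameters into the inequality: $-\beta_2 = -(-0.02) = 0.02$, whereas $\beta = 0.48$, so $\beta > -\beta_2$. Third, I would invoke the contrapositive of Proposition \ref{prop:neccondition1} to conclude that the Markov random graph model on this network cannot be strongly Rayleigh.

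The main (and essentially only) obstacle is the triangle-existence check, since the inequality itself is just arithmetic. Once that structural fact is in hand, the argument is immediate and parallels the Medici and Sampson cases verbatim. The proof would therefore read:

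\begin{proof}
The Lazega law firm graph contains at least one triangle, so the hypothesis of Proposition \ref{prop:neccondition1} is satisfied. Substituting the MLE estimates gives $\beta = 0.48$ and $-\beta_2 = 0.02$, so $\beta > -\beta_2$. By the contrapositive of Proposition \ref{prop:neccondition1}, the Markov random graph model on this network is not strongly Rayleigh.
\end{proof}
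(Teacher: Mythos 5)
Your proposal matches the paper's argument exactly: the paper also substitutes the MLE estimates $\beta = 0.48$ and $\beta_2 = -0.02$, observes $\beta > -\beta_2$, and applies Proposition \ref{prop:neccondition1} to conclude the model is not strongly Rayleigh. Your explicit verification that the Lazega graph contains a triangle is a small extra care the paper leaves implicit, but the route is the same.
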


\newpage
Finally, we analyze a network of friendships in a bank wiring room with \(n=14\) employees (see \cite{roeth} and \cite{pattisonrobins}). 

\begin{figure}[H]\label{fig:bankwiring}

\centering
         \includesvg[width=3in]{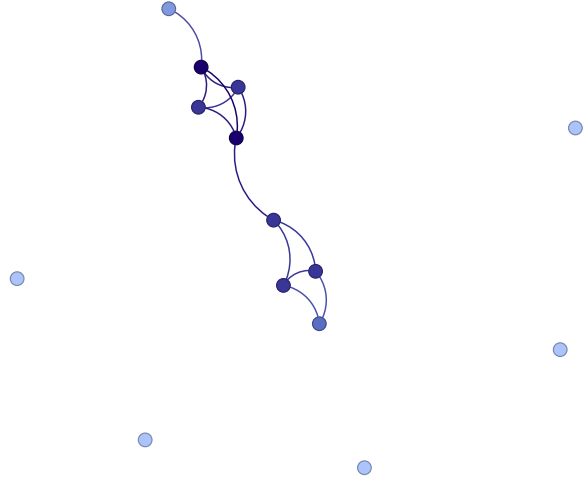}

\vspace*{2mm}{}
\caption{Bank wiring friendship network. Vertex-colouring is based on the degree distribution.}
\end{figure}

MCMC maximum likelihood estimation results in the following parameter estimates.
\begin{align*}
	\beta_1 & = -2.66 \\
	\beta_2 & = -0.29 \\
	\beta & = 3.19
\end{align*}
Similar to the previous networks, we have that \(\beta = 3.19 > -(-0.29) = -\beta_2\).
Therefore, by Proposition \ref{prop:neccondition1}, the Markov random graph model is not strongly Rayleigh.

\begin{prop}\label{prop:bankwiring}
	The Markov random graph model on the bank wiring friendship network is not strongly Rayleigh.
\end{prop}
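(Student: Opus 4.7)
The plan is to apply Proposition \ref{prop:neccondition1} directly, since this is essentially a one-line arithmetic verification once the hypothesis of that proposition is checked. Proposition \ref{prop:neccondition1} asserts that any strongly Rayleigh Markov random graph on a finite graph containing at least one triangle must satisfy $\beta \leq -\beta_2$; so my strategy is to verify the triangle hypothesis on the bank wiring network, then exhibit a violation of this inequality at the fitted parameter values.

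First, I would confirm by inspection of Figure \ref{fig:bankwiring} that the bank wiring friendship network contains at least one triangle. This is easily observable in workplace friendship data, where transitively closed triples are extremely common; a single $3$-cycle suffices to invoke Proposition \ref{prop:neccondition1}. If any doubt remains, one can check the adjacency data of the network directly from the source dataset in \cite{roeth}.

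Next, I would substitute the MCMC maximum likelihood estimates $\beta = 3.19$ and $\beta_2 = -0.29$ and observe that
\[\beta = 3.19 > 0.29 = -\beta_2,\]
which contradicts the necessary condition. By the contrapositive of Proposition \ref{prop:neccondition1}, the fitted Markov random graph model on this network fails to be strongly Rayleigh, which is exactly the desired conclusion.

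The main obstacle is essentially non-existent for the proof itself, as it reduces to a numerical inequality check. The only subtlety worth flagging is that the MCMC MLE yields a \emph{point estimate} of the parameters, so the proposition rigorously rules out the strong Rayleigh property for the fitted model but does not on its own quantify how far into the non-Rayleigh regime the true underlying parameters might lie; an honest statement of the result should acknowledge this statistical caveat, which is however consistent with the treatment of the Medici, Sampson, and Lazega networks earlier in this section.
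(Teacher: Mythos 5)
Your proposal is correct and matches the paper's own argument exactly: the paper likewise substitutes the fitted values $\beta = 3.19$ and $\beta_2 = -0.29$, observes $\beta > -\beta_2$, and invokes the contrapositive of Proposition \ref{prop:neccondition1}. Your explicit check of the triangle hypothesis and the statistical caveat about point estimates are reasonable additions but do not change the approach.
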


For all of the above networks, the lack of the strongly Rayleigh property implies that they do not exhibit strong negative dependence.
This is not surprising if we consider the implication of negative dependence when modelling a social network.
Suppose that Alice and Bob are both friends with Mary. 
If the model exhibited negative dependence, this would mean that the probability of Alice and Bob becoming friends is \textit{smaller than} the product of the individual edge probabilities of the pairs Alice and Mary and Alice and Bob and the two-star probability between Alice, Bob, and Mary.
\begin{figure}[H]
\centering
\begin{tikzpicture}
  \path (90:1.5) edge [semithick] (210:1.5)
  		(330:1.5) edge [semithick] (90:1.5);
  \fill (90:1.5)  node[circle, fill=mblue!50, draw=black, inner sep=3pt, label={Mary}] {}
        (210:1.5) node[circle, fill=mblue!50, draw=black, label={[xshift=0cm, yshift=-0.8cm]Alice}, inner sep=3pt] {}
        (330:1.5) node[circle, fill=mblue!50, draw=black, label={[xshift=0cm, yshift=-0.8cm]Bob}, inner sep=3pt] {};
\end{tikzpicture}
\end{figure}
This goes against much of what we understand about social networks and the transitivity effect.
In most cases, friends of friends are \textit{more} likely to become friends themselves at some point.  

Future work in this area would primarily be to consider other types networks and the presence or absence of the strongly Rayleigh property in those cases.

\np
\bibliographystyle{alpha}
\bibliography{references}

\end{document}